\documentclass[review,compress]{elsarticle}
\usepackage{lineno}
\usepackage[colorlinks,linkcolor=black]{hyperref}

\usepackage{amsmath,amsthm,amssymb}
\usepackage{multirow}
\usepackage{rotating}
\usepackage{algorithm}
\usepackage{algcompatible}
\usepackage{graphicx} %use graph format
\usepackage{epstopdf}
\usepackage{color}
\usepackage{ulem}

\modulolinenumbers[5]

\journal{XXX}
\textwidth 15cm
\textheight 22cm
\topmargin -0.5cm
\hoffset -1cm
\voffset -0.5cm
%%%%%%%%%%%%%%%%%%%%%%%
%% Elsevier bibliography styles
%%%%%%%%%%%%%%%%%%%%%%%
%% To change the style, put a % in front of the second line of the current style and
%% remove the % from the second line of the style you would like to use.
%%%%%%%%%%%%%%%%%%%%%%%

%% Numbered
\bibliographystyle{model1-num-names}

%% Numbered without titles
%\bibliographystyle{model1a-num-names}

%% Harvard
%\bibliographystyle{model2-names.bst}\biboptions{authoryear}

%% Vancouver numbered
%\usepackage{numcompress}\bibliographystyle{model3-num-names}

%% Vancouver name/year
%\usepackage{numcompress}\bibliographystyle{model4-names}\biboptions{authoryear}

%% APA style
%\bibliographystyle{model5-names}\biboptions{authoryear}

%% AMA style
%\usepackage{numcompress}\bibliographystyle{model6-num-names}

%% `Elsevier LaTeX' style
%\bibliographystyle{elsarticle-num}
%%%%%%%%%%%%%%%%%%%%%%%

\newtheorem{theorem}{Theorem}[section]

\newtheorem{remark}{Remark}[section]

\allowdisplaybreaks[4]

\begin{document}

\begin{frontmatter}
\title{On computing HITS ExpertRank
via lumping the hub matrix
\footnote{This work was supported by National Natural Science Foundation of China (Grant Nos. 12001363, 71671125
%, 12001395
)
%,
%Natural Science Foundation of Shanxi province, China (Grant No. 201901D211423)
.}}

%% Group authors per affiliation:
\author[mymainaddress]{Yongxin Dong}
%\address{Radarweg 29, Amsterdam}
%\ead{yhfeng@sues.edu.cn}
\author[mysecondaryaddress]{Yuehua Feng\corref{mycorrespondingauthor}}
\cortext[mycorrespondingauthor]{Corresponding author}
\ead{yhfeng@sues.edu.cn}
%% or include affiliations in footnotes:
\author[mymainaddress]{Jianxin You}
%\ead{yjx2256@vip.sina.com}
%\author[3]{Jinrui Guan}

\address[mymainaddress]
{\scriptsize School of Economics and Management,
Tongji University, Shanghai 200092, China
}
\address[mysecondaryaddress]
{\scriptsize School of Mathematics, Physics and Statistics,
Shanghai University of Engineering Science, Shanghai 201620, China}

%\address[3]{\scriptsize Department of Mathematics, Taiyuan Normal University, Shanxi 030619, China}

\begin{abstract}
The dangling nodes is the nodes with no out-links in the web graph. It saves many computational cost and operations provided the dangling nodes are lumped into one node. In this paper, motivated by so many dangling nodes in web graph, we develop theoretical results for HITS by the lumping method. We mainly have three findings. First, the HITS model can be lumped although the matrix involved is not stochastic. Second, the hub vector of the nondangling nodes can be computed separately from dangling nodes, but not vice versa. Third, the authoritative vector of the nondangling nodes is difficult to compute separately from dangling nodes. Therefore, it is better to compute hub vector of the hub matrix in priority, not authoritative vector of the authoritative matrix or them simultaneous.

\end{abstract}

\begin{keyword}
HITS, Lumping, Nondangling nodes, Dangling nodes, Similarity transformation
%\MSC[2020] 65F10, 65F50, 65C40, 15A06, 15A18, 15A21, 15A51, 68P20
\MSC[2020] 65F10, 65F50, 15A18, 15A21, 68P20

%\noindent{\bf AMS subject classifications. 15A06, 15A18, 65F15}

\end{keyword}

\end{frontmatter}

\section{Introduction}

The PageRank of Page and Brin, Hyperlink-Induced Topic Search (HITS) of Kleinberg, stochastic approach for link-structure analysis (SALSA) of Lempel and Moran use dominant eigenvector of non-negative matrices for ranking web page purpose \cite{Page1998StanfordDL,Kleinberg1999jacm,lempel2000stochastic}.
PageRank is used in Google, HITS is used in the ask.com search engine, and SALSA is a combination of PageRank and HITS \cite{lempel2000stochastic,langville2005survey}. Since the late 20th century, HITS is an another extremely successful modern web information retrieval application of dominant eigenvector. The resulting two ranking vectors (authority vector and hub vector) from HITS provide the ExpertRanks. The HITS method has broad applications such as product quality ranking, similarity ranking and so on. For discussions on HITS, together with the literature on modifications to overcome its weaknesses, we refer readers to \cite{Langville2006google}.

The eigenproblems related to web information retrieval and data mining can be of huge dimension. Because of memory constraint of computer, the power method has become the dominant method for solving the HITS and PageRank eigenproblems \cite{langville2005survey,Langville2006google}. As the web is very enormous, the web may contain more than $10^9$ pages and increases quickly and dynamically. It can even take much time (several hours or days) to compute a large web ranking vector. For the search-dependent HITS, the computation problem which only involves users' query related nodes is relatively small, that is why there is relatively less acceleration work on HITS. While for the search independent HITS, the matrices involved are usually of tremendous dimension and effective numerical accelerations are very desirable \cite{zhou2012jcam}.

As we know that the power method will be lack of efficiency when the eigen gap between the largest eigenvalue ($\lambda_1$) and the second largest eigenvalue ($\lambda_2$ in magnitude), is close enough to 1. The famous Krylov subspace methods can converge faster than the power method, but they are not suitable for such web information problem due to relatively large storage and subspace dimension and so on \cite{langville2005survey,Langville2006google,elden2007matrix}. Therefore, many acceleration methods for information retrieval model calculations include the aggregation methods \cite{Langville2006google}, extrapolation methods \cite{Brezinski2006simax,FengDong2021bims}, and two stage acceleration methods \cite{Lee2007im,Dong2017Calcolo} and other contributions \cite{langville2005survey,elden2007matrix} are developed. Among them, most methods consider the difficult computation case that the gap-ratio $\frac{|\lambda_2|}{|\lambda_1|}$ approximates 1. By lumping Google matrix, Ipsen and Selee analyzed the relationship between rankings of nondangling nodes and rankings of dangling nodes \cite{Ipsen2007simax}. To improve the computational efficiency of HITS ExpertRank, the filtered power method combining Chebyshev polynomials is proposed\cite{zhou2012jcam}. For more theoretical and numerical results on the web information retrieval model are available, see \cite{langville2005survey,Langville2006google,elden2007matrix,
FengDong2021bims,dong2021comments}. One may raise a question that whether the HITS model can still have similar lumping results, thus the computation cost can be reduced even if the matrix involved is not stochastic?

The rest of this paper is organized as follows. Section \ref{sec:HITSModel} describes the HITS model briefly. Section \ref{sec:Lumping} derives the main approach and theorems for lumping the HITS model. Finally, we give a short conclusion in section \ref{sec:conclusion}.

\section{HITS model}\label{sec:HITSModel}
The HITS model uses an adjacency matrix $L\in\mathbb{R}^{n\times n}$ to describe web link structure graph. The eigenvetor of $L^TL$ or $LL^T$ are employed to reveal the relative importance (rank) of corresponding web pages' authoritative vectors or hub vectors. Kleinberg \cite{Page1998StanfordDL} invented new matrices defined by
\begin{equation}\label{equ:authhubmatrix}
L^TL\in \mathbb{R}^{n\times n}, \quad LL^T \in \mathbb{R}^{n\times n},
\end{equation}
respectively, where $L$ is an adjacent matrix given by
\begin{equation*}
L_{ij} = \left\{ \begin{array}{l}
 1,~~ \mbox{if page $i$ links to page $j$,}\\
 0,~~\rm{otherwise}. \\
 \end{array} \right.
 \end{equation*}
In \eqref{equ:authhubmatrix}, if web page $i$ has no outlinks (i.e., image files, pdf with on links to other pages), it is called a dangling node; otherwise it is called a nondangling node.

The HITS method updates $v_a$ and $v_h$ iteratively from some initial vectors  $v_a^{(0)}$ and $v_h^{(0)}$,
\begin{align}\label{equ:vavhkk-1}
v_a^{(k)}=L^Tv_h^{(k-1)},\quad v_h^{(k)}=L v_a^{(k)},\quad k=1,2,\cdots .
\end{align}
Once one of $v_a$ and $v_h$ is convergent, the other vector is solved by multiplying $L$ or $L^T$. From \eqref{equ:vavhkk-1}, we have the following expression. The authoritative vector $\pi_a$ of the authoritative matrix $L^TL$ and the hub vector $\pi_h$ of the hub matrix are defined by computing the principle eigenvector of $L^TL$ and $LL^T$
\begin{align}\label{equ:LTLvecLLTvec}
\lambda_{\max}\pi_a = L^TL \pi_a,~\lambda_{\max}\pi_h=LL^T\pi_h,~\mbox{where}~ \pi_a\geq 0,~ \pi_h\geq 0,~ \| \pi_a \|_1 = 1,~\|\pi_h\|=1,
\end{align}
respectively. The matrix $L^TL$ or $LL^T$ is a symmetric positive semi-definite matrix, thus it has nonnegative eigenvalues. ExpertRanks is provided by the authoritative and hub vectors from HITS.

The expression \eqref{equ:LTLvecLLTvec} can't guarantee the uniqueness of $\lambda_{\max}$, $\pi_a$ or $\pi_h$. To make the uniqueness, we modify $L^TL$ and $LL^T$ such that they are primitive matrices. The authoritative matrix $A$ and hub matrix $H$ are defined by
\begin{align}
A=\xi L^TL+\frac{(1-\xi)}{n}ee^T,~ H=\xi LL^T+\frac{(1-\xi)}{n}ee^T,~ \mbox {where}~0<\xi <1,
~ e=\begin{bmatrix}1&\cdots&1\end{bmatrix}^T,
\end{align}
respectively. Accordingly, the authoritative and hub vector are defined by
\begin{align}\label{equ:mainpiapih}
\pi_a^TA=\pi_a^T, \quad \pi_h^TH=\pi_h^T,\quad \mbox{where}\quad \pi_a\geq 0,~\pi_h\geq0,~\pi_a^Te=1,~\pi_h^Te=1,
\end{align}
respective, where $e$ is a suitable length vector of all ones. In this paper, we thus mainly discuss the computation problem \eqref{equ:mainpiapih}. In the following section, we try to make theoretical and practical contributions for computing purposes.

\section{Lumping and related theorems}\label{sec:Lumping}

The adjacency matrix is lumpable if all dangling nodes are lumped into a single node \cite{Lee2007im,Ipsen2007simax}. According to \eqref{equ:authhubmatrix}, the adjacency matrix admits the structure
\begin{align}\label{equ:chooseP}
PLP^T=
\begin{bmatrix}
L_{11}&L_{12}\\
0&0
\end{bmatrix}\quad \mbox{and}\quad PL^TP^T=
\begin{bmatrix}
L_{11}^T&0\\
L_{12}^T&0
\end{bmatrix},
\end{align}
where $P$ is a suitable permutation matrix, $L_{11}\in\mathbb{R}^{k\times k}$, $L_{12}\in\mathbb{R}^{k\times(n-k)}$ and $k$ is the number of nondangling nodes.
Then we have \begin{align}\label{equ:PLPTPLTPT}
PLL^TP^T=PLP^T PL^TP^T
=&
\begin{bmatrix}L_{11}&L_{12}\\
0&0
\end{bmatrix}\begin{bmatrix}
L_{11}^T&0\\
L_{12}^T&0
\end{bmatrix} \nonumber\\
=&
\begin{bmatrix}
L_{11}L_{11}^T+L_{12}L_{12}^T&0\\
0&0
\end{bmatrix}.
\end{align}
After the primitive modification, we obtain
\begin{align}
PHP^T
=&P\left(\xi LL^T+\frac{1-\xi}{n}ee^T\right)P^T\nonumber\\
=&\xi \begin{bmatrix}
L_{11}L_{11}^T+L_{12}L_{12}^T&0\\
0&0
\end{bmatrix}+\frac{1-\xi}{n}ee^T\nonumber\\
=&
\begin{bmatrix}
\xi (L_{11}L_{11}^T+L_{12}L_{12}^T)+\frac{1-\xi}{n}ee^T&\frac{1-\xi}{n}ee^T\\
\frac{1-\xi}{n}ee^T&\frac{1-\xi}{n}ee^T
\end{bmatrix},
\end{align}
where $e$ is a suitable length vector of all ones. By using the lumping method and the similarity transformation matrix
\begin{align*}
Y=I_{n-k}-\widehat{e}e_1^T,\quad\mbox{with}\quad \widehat{e}=e-e_1=\begin{bmatrix}0,1,\cdots,1\end{bmatrix}^T\in \mathbb{R}^{n-k},
\end{align*}
where $I_{n-k}=\begin{bmatrix}e_1\cdots e_{n-k}\end{bmatrix}$ denotes the identity matrix of order $n-k$, and $e_i(i=1,2,\cdots,n-k)$ is its $i-$th column vector \cite{dong2021comments}.
Define
\begin{align}
X=\begin{bmatrix}I_k&0\\
0&Y\end{bmatrix},
\end{align}
%where $Y\in \mathbb{R}^{(n-k)\times(n-k)}$,
then we have
\begin{align}\label{equ:XPLLTPTX-1}
& XPHP^TX^{-1} \nonumber\\
%&XP\left(\xi LL^T+\frac{1-\xi}{n}ee^T
%\right)P^TX^{-1}\nonumber\\
=&
\begin{bmatrix}\xi (L_{11}L_{11}^T+L_{12}L_{12}^T)+\frac{1-\xi}{n}ee^T
&\frac{1-\xi}{n}ee^TY^{-1}\\
\frac{1-\xi}{n}Yee^T&\frac{1-\xi}{n}Yee^TY^{-1}
\end{bmatrix}\nonumber\\
=&\begin{bmatrix}
\xi (L_{11}L_{11}^T+L_{12}L_{12}^T)+\frac{1-\xi}{n}ee^T
&\frac{1-\xi}{n}ee^TY^{-1}e_1&\frac{1-\xi}{n}ee^TY^{-1}
\begin{bmatrix}e_2&\cdots&e_{n-k}\end{bmatrix}\\
\frac{1-\xi}{n}e_1^TYee^T&\frac{1-\xi}{n}e_1^TYee^TY^{-1}e_1
&\frac{1-\xi}{n}e_1^TYee^TY^{-1}\begin{bmatrix}e_2&\cdots&e_{n-k}\end{bmatrix}\\
\frac{1-\xi}{n}\begin{bmatrix}e_2^T\\ \vdots \\e_{n-k}^T\end{bmatrix}Yee^T
&\frac{1-\xi}{n}\begin{bmatrix}e_2^T\\ \vdots \\e_{n-k}^T\end{bmatrix}Yee^TY^{-1}e_1
&\frac{1-\xi}{n}\begin{bmatrix}e_2^T\\ \vdots \\e_{n-k}^T\end{bmatrix}Yee^T Y^{-1}\begin{bmatrix}e_2&\cdots&e_{n-k}\end{bmatrix}
\end{bmatrix}\nonumber\\
=&
\begin{bmatrix}
\xi (L_{11}L_{11}^T+L_{12}L_{12}^T)+\frac{1-\xi}{n}ee^T
&\frac{1-\xi}{n}ee^TY^{-1}e_1&\frac{1-\xi}{n}ee^TY^{-1}
\begin{bmatrix}e_2&\cdots&e_{n-k}\end{bmatrix}\\
\frac{1-\xi}{n}e_1^TYee^T&\frac{1-\xi}{n}e_1^TYee^TY^{-1}e_1
&\frac{1-\xi}{n}e_1^TYee^TY^{-1}\begin{bmatrix}e_2&\cdots&e_{n-k}\end{bmatrix}\\
0&0&0\\
\end{bmatrix}\nonumber\\
%=&
%\begin{bmatrix}
%\xi (L_{11}L_{11}^T+L_{12}L_{12}^T)+\frac{1-\xi}{n}ee^T
%&\frac{1-\xi}{n}ee^Te&\frac{1-\xi}{n}ee^TY^{-1}
%\begin{bmatrix}e_2&\cdots&e_{n-k}\end{bmatrix}\\
%\frac{1-\xi}{n}e_1^TYee^T&\frac{1-\xi}{n}e_1^Tee^Te
%&\frac{1-\xi}{n}e_1^Te_1e^TY^{-1}\begin{bmatrix}e_2&\cdots&e_{n-k}\end{bmatrix}\\
%0&0&0\\
%\end{bmatrix}\nonumber \\
=&\begin{bmatrix}
\xi (L_{11}L_{11}^T+L_{12}L_{12}^T)+\frac{1-\xi}{n}ee^T
&\frac{1-\xi}{n}(n-k)e&\frac{1-\xi}{n}ee^TY^{-1}\begin{bmatrix}e_2&\cdots&e_{n-k}\end{bmatrix}\\
\frac{1-\xi}{n}e^T&\frac{1-\xi}{n}(n-k)
&\frac{1-\xi}{n}e^TY^{-1}\begin{bmatrix}e_2&\cdots&e_{n-k}\end{bmatrix}\\
0&0&0\\
\end{bmatrix},
\end{align}
where we have used the fact that $Y^{-1}e_1=e$.
Thus, we have proved the following theorem.

\begin{theorem}\label{thm:dong1}
With the above notation, let
\begin{equation}
X=\begin{bmatrix}
I_k& 0\\
0& Y
\end{bmatrix},\quad \mbox{where} \quad Y = I_{n-k} - \widehat{e}e_1^T \quad \mbox{and} \quad
\widehat{e} = e - e_1= \begin{bmatrix} 0 &1 &\cdots &1 \end{bmatrix}^T,
\end{equation}
and $P$ be a suitable permutation matrix. Then
\begin{align}\label{eqn:XPHPX-eqn:H1andH2}
XPHP^TX^{-1}= \begin{bmatrix}
H^{(1)}&H^{(2)}\\
0&0
\end{bmatrix},
\end{align}
where
\begin{align*}
H^{(1)} =\begin{bmatrix}
H^{(1)}_{11}    & \frac{1-\xi}{n}(n-k)e\\
\frac{1-\xi}{n}e^T   &\frac{1-\xi}{n}(n-k)\end{bmatrix}
~\mbox{and}~~
H^{(2)}=
\begin{bmatrix}
\frac{1-\xi}{n}ee^TY^{-1}\begin{bmatrix}e_2,&\cdots&e_{n-k}\end{bmatrix}\\
\frac{1-\xi}{n}e^T Y^{-1}\begin{bmatrix}e_2,&\cdots&e_{n-k}\end{bmatrix}
\end{bmatrix},
\end{align*}
with $H^{(1)}\in \mathbb{R}^{(k+1)\times (k+1)}$, $H^{(2)}\in \mathbb{R}^{(k+1)\times (n-k-1)}$ and $H^{(1)}_{11}=\xi (L_{11}L_{11}^T+L_{12}L_{12}^T)+\frac{1-\xi}{n}ee^T$.
$H^{(1)}$ has the same nonzero eigenvalues as $H$.
\end{theorem}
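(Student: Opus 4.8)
The plan is to combine two standard facts: a similarity transformation preserves the spectrum together with algebraic multiplicities, and the characteristic polynomial of a block upper triangular matrix is the product of the characteristic polynomials of its diagonal blocks. Since $P$ is a permutation matrix and $X$ is invertible, $XPHP^TX^{-1}$ is similar to $H$; it therefore suffices to exhibit the $2\times2$ block form of $XPHP^TX^{-1}$ with a zero block row of width $n-k-1$, and then read off the eigenvalues.

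First I would record the ingredients about $Y$. A direct computation gives $Y^{-1}=I_{n-k}+\widehat e\, e_1^T$ (using $e_1^T\widehat e=0$), hence $Y^{-1}e_1=e$, and also $Ye=(I_{n-k}-\widehat e\, e_1^T)e=e-\widehat e=e_1$, so that $Yee^T=e_1e^T$. These identities are the only real computational input; everything else is block multiplication. In particular $X^{-1}=\begin{bmatrix}I_k&0\\0&Y^{-1}\end{bmatrix}$, so $X$ is invertible.

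Next I would carry out $XPHP^TX^{-1}$ starting from the explicit form of $PHP^T$ after the primitive modification. Conjugating its lower-left block by $Y$ on the left and its lower-right block by $Y$ and $Y^{-1}$, and using $Yee^T=e_1e^T$, one finds that the last $n-k-1$ rows (those indexed by $e_2,\dots,e_{n-k}$) are entirely zero, because $e_i^Te_1=0$ for $i\ge2$; this is precisely the simplification performed in \eqref{equ:XPLLTPTX-1}. Collecting the surviving entries and using $Y^{-1}e_1=e$ and $e^TY^{-1}e_1=e^Te=n-k$ produces exactly the blocks $H^{(1)}$ and $H^{(2)}$ of the statement, so $XPHP^TX^{-1}=\begin{bmatrix}H^{(1)}&H^{(2)}\\0&0\end{bmatrix}$ with the indicated sizes.

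Finally, from this block triangular form the characteristic polynomial of $XPHP^TX^{-1}$ — and hence, by similarity, of $H$ — factors as $\det\!\big(H^{(1)}-\lambda I_{k+1}\big)\,(-\lambda)^{\,n-k-1}$. Thus $H$ and $H^{(1)}$ have the same characteristic polynomial apart from the factor $(-\lambda)^{\,n-k-1}$, so they share all nonzero eigenvalues with multiplicities, which is the claim. I do not expect a genuine obstacle here: once $Ye=e_1$ and $Y^{-1}e_1=e$ are in place, the proof is pure bookkeeping, and the only point demanding care is tracking which rows and columns carry the single index $e_1$ versus the block $e_2,\dots,e_{n-k}$ in the permuted ordering.
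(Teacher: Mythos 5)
Your proposal is correct and follows essentially the same route as the paper: a direct block computation of $XPHP^TX^{-1}$ using the identities $Y^{-1}e_1=e$ and $Ye=e_1$ (so $Yee^T=e_1e^T$), which zeroes out the last $n-k-1$ rows and yields the stated $H^{(1)}$, $H^{(2)}$. Your explicit characteristic-polynomial factorization $\det\bigl(H^{(1)}-\lambda I_{k+1}\bigr)(-\lambda)^{n-k-1}$ merely spells out the eigenvalue claim that the paper leaves implicit, so no gap remains.
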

The following theorem distinguished the relationship between the hub ranking vector $\pi_h$ of $H$ and the stationary distribution $\sigma$ of $H^{(1)}$. As we can see, the leading $k$ elements of $\pi_h$ represent the hub vector due to the nondangling nodes, and the trailing $n-k$ elements stand for the hub vector associated with the dangling nodes. Thus the relationship between ranking of dangling nodes and that of nondanling nodes is derived. For ease of the following proof process, we show the structure of the submatrix $Y$. Separate the first leading row and column,
\begin{align}\label{equ:Y}
Y=\begin{bmatrix}1&0\\
-e&I_{n-k-1}\end{bmatrix}.
\end{align}
Motivated by validating the analytic relationship between ranking vector of nondangling nodes and that of dangling nodes in the hub matrix model, we present our main lumping results for HITS.

\begin{theorem}
\label{thm:dong2}
With the above notation, let %$\sigma^T H^{(1)}=\lambda_{\max}\sigma^T$, i.e.,
\begin{align}\label{eqn:lumpedHatG(1)}
%\sigma^T \begin{bmatrix}
%\xi (L_{11}L_{11}^T+L_{12}L_{12}^T)+\frac{1-\xi}{n}ee^T
%    & \frac{1-\xi}{n}(n-k)e\\
%\frac{1-\xi}{n}e^T   &\frac{1-\xi}{n}(n-k)\end{bmatrix}
%= \lambda_{\max}\sigma^T,
\sigma^T H^{(1)}=\lambda_{\max}\sigma^T, \quad
 \sigma \in \mathbb{R}^{k+1},\, \sigma \geq 0, \, \|\sigma\|_1 = 1,
\end{align}
where $H^{(1)}$ is defined by \eqref{eqn:XPHPX-eqn:H1andH2} and partition $\sigma^T = \begin{bmatrix} \sigma^T_{1:k} & \sigma_{k+1}\end{bmatrix}$,
where $\sigma_{k+1}$ is a scalar, and $\lambda_{\max}$ is the largest eigenvalue of $L^TL$ or $LL^T$. Then the hub vector of $H$ equals
\begin{equation}\label{eqn:pi_h^T}
\pi_h^T =
\begin{bmatrix} \sigma^T_{1:k}
& \frac{1}{\lambda_{\max}}\sigma^T
\begin{bmatrix}\frac{1-\xi}{n}ee^T \\ \frac{1-\xi}{n}e^T\end{bmatrix}\end{bmatrix} P,
\end{equation}
where $P$ is a suitable permutation matrix satisfying \eqref{equ:chooseP}.
\end{theorem}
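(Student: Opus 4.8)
The plan is to transport the eigenvector problem for $H$ through the explicit similarity furnished by Theorem~\ref{thm:dong1} and to read off $\pi_h$ (the normalized Perron, i.e.\ dominant, eigenvector of the entrywise-positive, hence primitive, matrix $H$) from $\sigma$. Here $\lambda_{\max}$ is the dominant eigenvalue of $H^{(1)}$, which by Theorem~\ref{thm:dong1} coincides with that of $H$, so $\sigma$ is the Perron left eigenvector of $H^{(1)}$. Write $\widetilde H := XPHP^TX^{-1}$ for the block form in \eqref{eqn:XPHPX-eqn:H1andH2}. Since $XP$ is invertible and $P^TX^{-1}\cdot XP = P^TP = I_n$, a row vector $w^T$ satisfies $w^T\widetilde H = \lambda_{\max} w^T$ if and only if $(w^TXP)H = \lambda_{\max}(w^TXP)$; and since the block $0$ in $\widetilde H$ contributes only the eigenvalue $0$, the dominant left eigenvector of $\widetilde H$ lives in the $H^{(1)}$-block. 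Thus it suffices to (i) exhibit that dominant left eigenvector $w^T$ of $\widetilde H$, (ii) compute $w^TXP$, and (iii) check it is already the nonnegative, $\ell_1$-normalized Perron vector, which by uniqueness must be $\pi_h^T$.

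For (i), partition $w^T = \begin{bmatrix} w_1^T & w_2^T\end{bmatrix}$ with $w_1\in\mathbb R^{k+1}$ and $w_2\in\mathbb R^{n-k-1}$. The block-triangular shape $\widetilde H = \begin{bmatrix} H^{(1)} & H^{(2)} \\ 0 & 0\end{bmatrix}$ turns $w^T\widetilde H = \lambda_{\max} w^T$ into $w_1^T H^{(1)} = \lambda_{\max} w_1^T$ and $w_1^T H^{(2)} = \lambda_{\max} w_2^T$. The first says $w_1$ is the dominant left eigenvector of $H^{(1)}$, so take $w_1 = \sigma$; the second then gives $w_2^T = \frac{1}{\lambda_{\max}}\sigma^T H^{(2)}$. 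Hence the candidate is $\pi_h^T = \begin{bmatrix} \sigma^T & \frac{1}{\lambda_{\max}}\sigma^T H^{(2)}\end{bmatrix} XP$.

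For (ii)--(iii), since $X = \begin{bmatrix} I_k & 0 \\ 0 & Y\end{bmatrix}$, the leading $k$ entries of $\begin{bmatrix} \sigma^T & \frac{1}{\lambda_{\max}}\sigma^T H^{(2)}\end{bmatrix} X$ are $\sigma_{1:k}^T$ and the trailing $n-k$ entries are $\begin{bmatrix} \sigma_{k+1} & \frac{1}{\lambda_{\max}}\sigma^T H^{(2)}\end{bmatrix} Y$. From $Y = I_{n-k} - \widehat e e_1^T$ and $e_1^T\widehat e = 0$ we get $Y^{-1} = I_{n-k} + \widehat e e_1^T$, hence $e^T Y^{-1} = e^T + (n-k-1)e_1^T$; substituting this into the formula for $H^{(2)}$ in \eqref{eqn:XPHPX-eqn:H1andH2} and using $\sigma \ge 0$, $\|\sigma\|_1 = 1$ (so $\sigma_{1:k}^T e + \sigma_{k+1} = 1$) gives $\sigma^T H^{(2)} = \frac{1-\xi}{n}e^T$. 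The $(k+1)$st scalar component of $\sigma^T H^{(1)} = \lambda_{\max}\sigma^T$ in \eqref{eqn:lumpedHatG(1)} reads $\sigma_{k+1} = \frac{(1-\xi)(n-k)}{n\lambda_{\max}}$, and a short computation with $Y$ (whose only effect is to replace the first trailing entry by $\sigma_{k+1} - \frac{1}{\lambda_{\max}}\sigma^T H^{(2)}e$) collapses the trailing block to the constant row $\frac{1-\xi}{n\lambda_{\max}}e^T$; this equals $\frac{1}{\lambda_{\max}}\sigma^T\begin{bmatrix}\frac{1-\xi}{n}ee^T \\ \frac{1-\xi}{n}e^T\end{bmatrix}$, producing exactly the bracketed row of \eqref{eqn:pi_h^T} before the final $P$. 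That row is nonnegative and has $\ell_1$-norm $\|\sigma_{1:k}\|_1 + (n-k)\frac{1-\xi}{n\lambda_{\max}} = \|\sigma_{1:k}\|_1 + \sigma_{k+1} = 1$, and multiplying by the permutation $P$ changes neither; by uniqueness of the Perron vector of $H$ it is therefore $\pi_h^T$, i.e.\ \eqref{eqn:pi_h^T}.

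The only genuinely delicate part is (ii)--(iii): keeping the block sizes $k$, $1$, $n-k-1$ straight through the actions of $Y$ and $Y^{-1}$ and the column selection $\begin{bmatrix} e_2 & \cdots & e_{n-k}\end{bmatrix}$, and seeing that the entrywise-constant matrix $\begin{bmatrix}\frac{1-\xi}{n}ee^T \\ \frac{1-\xi}{n}e^T\end{bmatrix}$ together with the last column of $H^{(1)}$ is what forces the dangling part to collapse to a single repeated value. The rest is dictated by Theorem~\ref{thm:dong1} and Perron--Frobenius.
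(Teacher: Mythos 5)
Your proof is correct and follows essentially the same route as the paper: identify $\begin{bmatrix}\sigma^T & \frac{1}{\lambda_{\max}}\sigma^T H^{(2)}\end{bmatrix}$ as the dominant left eigenvector of the block-triangular matrix $XPHP^TX^{-1}$, transport it back through $XP$, and use the structure of $Y$ together with the last column of $\sigma^T H^{(1)}=\lambda_{\max}\sigma^T$ to collapse the trailing block to $\frac{1-\xi}{n\lambda_{\max}}e^T$. Your explicit verification that the resulting vector is nonnegative with $\ell_1$-norm $1$ (via $\sigma_{k+1}=\frac{(1-\xi)(n-k)}{n\lambda_{\max}}$) actually closes the normalization step that the paper only states conditionally, and your reading of $\lambda_{\max}$ as the dominant eigenvalue of $H^{(1)}$ (equivalently of $H$) is the correct interpretation of the statement.
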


\begin{proof}
According to Theorem \ref{thm:dong1}, the stochastic matrix $H^{(1)}$ of order $k+1$ has the same nonzero eigenvalues as $H$. From \eqref{eqn:XPHPX-eqn:H1andH2} and \eqref{eqn:lumpedHatG(1)}, we can obtain that $\left[\sigma^T\right.  \left. \frac{1}{\lambda_{\max}}\sigma^T H^{(2)} \right]$ is an eigenvector for $XPHP^T X^{-1}$ associated with the eigenvalue $\lambda_{\max}$. Therefore,
\begin{align}
\widetilde{\pi}^T=\begin{bmatrix}\sigma^T   &\frac{1}{\lambda_{\max}}\sigma^T H^{(2)} \end{bmatrix}XP
\end{align}
is an eigenvector of $H$ associated with $\lambda_{\max}$. Since $H$ and $H^{(1)}$ have the same nonzero eigenvalues, and the principle eigenvalue of $H$ is distinct, the stationary probability distribution $\sigma$ of $H^{(1)}$ is unique. We repartition
\begin{align}
\widetilde{\pi}^T=
\begin{bmatrix}
\sigma_{1:k}^T
&\begin{bmatrix}\sigma_{k+1}  &  \frac{1}{\lambda_{\max}}\sigma^T H^{(2)} \end{bmatrix}
\end{bmatrix}
\begin{bmatrix}I_k&0\\
0& Y
\end{bmatrix}P.
\end{align}
Multiplying out
%\begin{align}
$\widetilde{\pi}^T=\begin{bmatrix}\sigma_{1:k}^T
&\begin{bmatrix}\sigma_{k+1} & \frac{1}{\lambda_{\max}}\sigma^T H^{(2)} \end{bmatrix}
Y \end{bmatrix}P$.
%\end{align}
%Partitioning $Y = \begin{bmatrix}
%y_{11}&y_{12}\\
%y_{21}&Y_{22}
%\end{bmatrix}$ with a scalar $\widehat{l}_{11}$, a row vector $\widehat{l}_{12}$, a column vector $\widehat{l}_{21}$, and a square submatrix $\widehat{L}_{22}$,
Hence, by \eqref{eqn:XPHPX-eqn:H1andH2} and \eqref{equ:Y}, we have
\begin{align}
&\begin{bmatrix}
\sigma_{k+1} & \frac{1}{\lambda_{\max}}\sigma^T H^{(2)}
\end{bmatrix}
\begin{bmatrix}
1&0\\
-e&I_{n-k-1}
\end{bmatrix}\nonumber\\
=&\begin{bmatrix}
\sigma_{k+1} - \frac{1}{\lambda_{\max}}\sigma^T H^{(2)}e
& \frac{1}{\lambda_{\max}}\sigma^T H^{(2)}
\end{bmatrix} \nonumber\\
=&\begin{bmatrix}
\frac{1}{\lambda_{\max}}\sigma^T \begin{bmatrix}
 \frac{1-\xi}{n}(n-k)e\\
\frac{1-\xi}{n}(n-k)\end{bmatrix} - \frac{1}{\lambda_{\max}}\sigma^T H^{(2)}e
& \frac{1}{\lambda_{\max}}\sigma^T \begin{bmatrix}
\frac{1-\xi}{n}ee^TY^{-1}\begin{bmatrix}e_2&\cdots&e_{n-k}\end{bmatrix}\\
\frac{1-\xi}{n}e^TY^{-1}\begin{bmatrix}e_2&\cdots&e_{n-k}\end{bmatrix}
\end{bmatrix}
\end{bmatrix} \nonumber\\
=&\begin{bmatrix}
\frac{1}{\lambda_{\max}}\sigma^T \begin{bmatrix}
 \frac{1-\xi}{n}e(e^Te)\\
\frac{1-\xi}{n}(e^Te)\end{bmatrix} - \frac{1}{\lambda_{\max}}\sigma^T \begin{bmatrix}
\frac{1-\xi}{n}ee^T\widehat{e}\\
\frac{1-\xi}{n}e^T\widehat{e}
\end{bmatrix}
& \frac{1}{\lambda_{\max}}\sigma^T
\begin{bmatrix}
\frac{1-\xi}{n}ee^T\\
\frac{1-\xi}{n}e^T
\end{bmatrix}\begin{bmatrix}e_2&\ldots&e_{n-k}\end{bmatrix}
\end{bmatrix} \nonumber\\
=&\begin{bmatrix}
\frac{1}{\lambda_{\max}}\sigma^T
\begin{bmatrix}
 \frac{1-\xi}{n}ee^T\\
\frac{1-\xi}{n} e^T\end{bmatrix}e_1
&\frac{1}{\lambda_{\max}}\sigma^T
\begin{bmatrix}
\frac{1-\xi}{n}ee^T\\
\frac{1-\xi}{n}e^T
\end{bmatrix}\begin{bmatrix}e_2&\ldots&e_{n-k}\end{bmatrix}\end{bmatrix}\nonumber\\
=&\frac{1}{\lambda_{\max}}\sigma^T
\begin{bmatrix} \frac{1-\xi}{n}ee^T\\ \frac{1-\xi}{n}e^T\end{bmatrix},
\end{align}
due to the fact that
\begin{align*}
&Y^{-1}\begin{bmatrix}e_2&\cdots&e_{n-k}\end{bmatrix}=\begin{bmatrix}e_2&\cdots&e_{n-k}\end{bmatrix} \quad \mbox{and} \quad
\sigma_{k+1}=\frac{1}{\lambda_{\max}}\sigma^T
\begin{bmatrix}
\frac{1-\xi}{n}(n-k)e\\ \frac{1-\xi}{n}(n-k) \end{bmatrix}.
%&H^{(2)}=\begin{bmatrix}
%\frac{1-\xi}{n}ee^TY^{-1}\\
%\frac{1-\xi}{n}e^TY^{-1}
%\end{bmatrix}\begin{bmatrix}e_2&\cdots&e_{n-k}\end{bmatrix}.
\end{align*}
Hence,
\begin{equation}
\widetilde{\pi}^T =
\begin{bmatrix} \sigma^T_{1:k}
& \frac{1}{\lambda_{\max}}\sigma^T
\begin{bmatrix}\frac{1-\xi}{n}ee^T \\ \frac{1-\xi}{n}e^T\end{bmatrix}
\end{bmatrix}P,
\end{equation}
where $P$ is a suitable permutation matrix satisfying \eqref{equ:chooseP}. As discussed above and $\pi_h$ is unique, we conclude that $\widetilde{\pi}=\pi_h$ if $e^T\widetilde{\pi}=1$.
\end{proof}
%The following comments are in order.
\begin{remark}
Theorem \ref{thm:dong2} shows that we can compute the ranking vector of submatrix $H^{(1)}$ which is derived from the hub matrix, and then recover the hub vector according to \eqref{eqn:pi_h^T}.
\end{remark}

\begin{remark}
One can generalize the concrete invertible matrix $Y$ in \eqref{equ:Y} in theorems \ref{thm:dong1} and \ref{thm:dong2} by any invertible similarity transformation matrix satisfying the condition $\widehat{Y}e=e_1$. For more detailed induction, we refer readers to \cite{dong2021comments}.
\end{remark}

\begin{remark}\label{re:3.3}
Since
\begin{align}\label{equ:trans-PLPTPLTPT}
PAP^T=PL^TLP^T = PL^TP^T PLP^T
=&\begin{bmatrix}
L_{11}^T&0\\
L_{12}^T&0
\end{bmatrix}
\begin{bmatrix}
L_{11}&L_{12}\\
0&0
\end{bmatrix} =
%\nonumber\\ =&
\begin{bmatrix}
L_{11}^T L_{11}&L_{11}^TL_{12}\\
L_{12}^TL_{11}&L_{12}^TL_{12}
\end{bmatrix},
\end{align}
we thus remark that it is cheaper to compute \eqref{equ:PLPTPLTPT}, rather than \eqref{equ:trans-PLPTPLTPT}. This is due to $PAP^T$ is denser than $PHP^T$. Hence, it is better to compute the hub matrix $LL^T$ first, when compared with the authoritative matrix $L^TL$, so does the hub and authoritative matrices' primitive modifications.
\end{remark}

\begin{remark}
Since one can permutate $L$ such that \eqref{equ:chooseP} holds, however, the web adjacency matrix is very sparse (usually there is only about ten entries per row), one can even permutate $L$ such that
$\widetilde{P}L\widetilde{P}^T=\begin{bmatrix}\widetilde{L}_{11}&0\\
\widetilde{L}_{21}&0\end{bmatrix}$, this phenomenon can be verified by web data matrix from on-line Florida sparse matrix collection\footnote{\href{w}{https://sparse.tamu.edu/}}. %\href{}{https://sparse.tamu.edu/}.
Particularly, the number of all zero columns may be more than the number of all zero rows. In this case, the authoritative ranking vector is recommended to have priority in computing. But note that this case is very rare.
\end{remark}

\section{Conclusion}\label{sec:conclusion}

In this paper, we have studied a HITS computation approach via lumping dangling nodes of hub matrix $LL^T$ into a single node. Thus we have answered the HITS computation question leaving in the introduction.

The HITS can be computed by lumping approach despite the involved matrices are not stochastic. The approach which we discuss is useful whether the HITS model is search-dependent or not. For the hub vector, Theorem \ref{thm:dong2} shows us that the rankings of nondangling nodes can be computed independently from that of dangling nodes; while rankings of dangling nodes depends on the rankings of nondangling nodes.
According to Remark \ref{re:3.3}, the authoritative vector is relatively difficult to compute when compared with the hub vector. Thus we suggest that it is better to compute the hub vector in priority rather than authoritative vector or both of them simultaneously.

Further researches may include how to compute SALSA by the lumping method.
Questions like the ranking vector relationship of dangling nodes and nondangling nodes in SALSA model are also worthy of studying.

\section*{References}
\bibliography{hitsLumping}

\end{document}